\DeclareMathOperator{\cond}{Cond}
\newtheorem{thm}{Theorem}[section]
\newtheorem{lem}[thm]{Lemma}
\newtheorem{prop}[thm]{Proposition}
\theoremstyle{definition}
\newtheorem{comm}[thm]{Comment}
\newtheorem{ex}{Example}
\newtheorem{remark}[thm]{Remark}
\theoremstyle{plain}
\newtheorem{cor}[thm]{Corollary}
\numberwithin{equation}{section}
\newcommand{\C}{\mathbb{C}}
\newcommand{\N}{\mathbb{N}}
\newcommand{\Q}{\mathbb{Q}}
\newcommand{\Z}{\mathbb{Z}}
\newcommand{\SL}{\mathrm{SL}}
\newcommand{\lan}{\langle}
\newcommand{\ran}{\rangle}
\newcommand{\ord}{{\rm ord}}
\newcommand{\OO}{{\mathcal O}}
\newcommand{\Hup}{\mathbb{H}}
\newcommand{\mrm}[1]{\mathrm{#1}}
\newcommand{\mcal}[1]{\mathcal{#1}}
\newcommand{\kro}[2]{\left( \frac{#1}{#2} \right) }
\title[Fourier coefficients of half-integral weight modular forms]{A note on the Fourier coefficients of half-integral weight modular forms}
\author[N. Kumar]{Narasimha Kumar}
\email{narasimha.kumar@iith.ac.in}
\address{Department of Mathematics \\
Indian Institute of Technology Hyderabad \\
Ordnance Factory Estate \\
Yeddumailaram 502205 \\
Andhra Pradesh, INDIA.}
\author[S. Purkait]{Soma Purkait}
\email{Soma.Purkait@warwick.ac.uk}
\address{Mathematics Institute\\
              University of Warwick\\
             Coventry, CV4 7AL, United Kingdom.}
\keywords{Modular forms, half-integral weight, algebraicity, Sturm's bound}
\subjclass[2010]{Primary 11F30, 11F37; Secondary 11F11}
\begin{document}
\begin{abstract}
In this  note, we show that the algebraicity of the Fourier coefficients of  half-integral weight modular forms
can be determined by checking the algebraicity of the first few of them.
We also give a necessary and sufficient condition for a half-integral weight modular 
form to be in  Kohnen's +-subspace by considering only finitely many terms.
\end{abstract}
\maketitle
\section{Introduction}
In the theory of modular forms, it is of fundamental interest to understand the algebraicity of the Fourier coefficients 
of modular forms. 
It is well-known that a normalized Hecke eigenform of integral weight has algebraic Fourier coefficients, 
since the Hecke eigenvalues coincide with the Fourier coefficients. Moreover, there exists a number field
containing all these Fourier coefficients. However, for half-integral weight 
modular forms, the authors are not aware of any such results.

In this note, we show that if the Fourier coefficients of a half-integral weight modular form are algebraic 
up to Sturm's bound (for half-integral weight modular forms), which is specified in terms of the level and weight of the corresponding modular form, 
then so are all others (cf. Theorem~\ref{thm:alg} in the text). 

In his remarkable work \cite{Koh82}, Kohnen defines the $+$-subspace (new) and proves that it is isomorphic to a space of newforms of integral weight via certain 
linear combination of Shimura correspondences. In the last section, we give a necessary and sufficient condition for a half-integral weight modular 
form to be in Kohnen's $+$-subspace by checking the vanishing condition up to Sturm's bound. 

\section{Multiplication by Theta series}\label{Section:ThetaSeries}
Let $k>1$ be an odd integer. Let $f$ be a half-integral weight modular form of weight $k/2$, level $N$ with $4 \mid N$, and an even Dirichlet 
character $\chi$. In particular, $f \in M_{k/2}(\Gamma_1(N))$, therefore $f$ has a Fourier expansion of the form 
$f(z)=\sum_{n=0}^{\infty} a_f(n) q^n,$ where $q = e^{2 \pi iz}$. 
Let $\Theta(z)= \sum_{n=0}^{\infty} b_\Theta(n) q^{n^2}$ with $b_{\Theta}(0)=1$, $b_\Theta(n)=2$ for all $n \geq 1$.

In this section, we show that the Fourier coefficients of  $f$ are algebraic
if the Fourier coefficients of $f \Theta$ are algebraic.

By~\cite{Shimura}, we know that $\Theta \in M_{1/2}(4,\chi_{\mrm{triv}})$, where $\chi_{\mrm{triv}}$ stands 
for the trivial Dirichlet character. Define $$g :=f \Theta. $$
If $f \in M_{k/2}(N,\chi)$, then it is easy to see that $g \in M_{\frac{k+1}{2}}(N, \chi. \chi_{-1}^{\frac{k+1}{2}})$, 
where $\chi_{-1}$ is the non-trivial Dirichlet character modulo $4$.

Suppose $g(z)$ has the $q$-expansion given by 
$g(z) = \sum_{n=0}^{\infty} c(n) q^n$. Since $g$ is the product of
$f$ and $\Theta$, we see that for all $n \geq 0$, we have $$c(n) = \sum_{x+y^2=n} a_f(x) b_\Theta(y).$$

In general, the product of two eigenforms need not be an eigenform, in particular $f \Theta$ need not be an eigenform. 
Hence, one cannot conclude that the Fourier coefficients of $f \Theta$ generate a number field. 
However, if we assume that Fourier coefficients of $f \Theta$ generate a number field, then 
by a simple induction argument, we can show that the Fourier coefficients of $f$ also 
generate the same number field.

Instead of proving the above claim for $f$ and $\Theta$, we prove it for 
the product of two general half-integral weight modular forms.

\begin{prop}
\label{main-prop}
Let $f(z) = \sum_{n=n_0}^{\infty} a_n q^n \in M_{ k_1/2 }(N_1,\chi_1) $, 
$h(z) = \sum_{n=n_1}^{\infty} b_n q^n \in M_{k_2/2}(N_2,\chi_2) $ be two  modular forms
such that $a_{n_0} \not = 0$, $b_{n_1} \not =0$. 
Suppose that the Fourier coefficients of the product $fh$ belong to a number field $K$. 
If $a_i \in K$ for all $i \geq n_0$, then $b_i  \in K$ for all $i \geq n_1$.
\end{prop}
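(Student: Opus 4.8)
The plan is to run an induction on the index $j$ of the coefficients $b_j$, exploiting the explicit Cauchy-product formula for the Fourier coefficients of $fh$. Write $fh(z) = \sum_m c_m q^m$, so that
$$c_m = \sum_{i+j=m} a_i b_j,$$
where the sum runs over $i \geq n_0$ and $j \geq n_1$. By hypothesis every $c_m$ lies in $K$, and we are assuming every $a_i \in K$.

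First I would settle the base case. Since $a_i = 0$ for $i < n_0$ and $b_j = 0$ for $j < n_1$, the lowest-order term of $fh$ sits in degree $n_0+n_1$, with
$$c_{n_0+n_1} = a_{n_0} b_{n_1}.$$
As $a_{n_0} \neq 0$ and both $a_{n_0}, c_{n_0+n_1} \in K$, this gives $b_{n_1} = c_{n_0+n_1}/a_{n_0} \in K$.

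For the inductive step, suppose $b_{n_1}, \ldots, b_{n_1+t-1} \in K$ for some $t \geq 1$. Examining the coefficient in degree $n_0+n_1+t$ and separating off the term with $i = n_0$, I would write
$$c_{n_0+n_1+t} = a_{n_0}\, b_{n_1+t} + \sum_{i>n_0} a_i\, b_{n_0+n_1+t-i}.$$
In the remaining sum each index $i > n_0$ forces $j = n_0+n_1+t-i < n_1+t$, so every $b_j$ appearing there is either zero (when $j < n_1$) or already known to lie in $K$ by the induction hypothesis. Combined with $a_i \in K$ and $c_{n_0+n_1+t} \in K$, solving for $b_{n_1+t}$ and dividing by $a_{n_0} \neq 0$ yields $b_{n_1+t} \in K$, which closes the induction and establishes the claim for all $j \geq n_1$.

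The argument is essentially formal, so there is no deep obstacle; the only point requiring care is the index bookkeeping, namely verifying that the ``error'' terms in the convolution at each stage involve only $b_j$ with $j$ strictly smaller than the current index, and hence are covered by the induction hypothesis, while the nonvanishing of the leading coefficient $a_{n_0}$ is precisely what licenses the division at every step. I would also remark that the roles of $f$ and $h$ are symmetric, so the same scheme proves the converse implication, and that nothing about the modularity or half-integral weight of $f$ and $h$ is used beyond the existence of $q$-expansions with well-defined nonzero leading terms.
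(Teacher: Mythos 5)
Your proof is correct and follows essentially the same argument as the paper: induction on the coefficient index via the Cauchy product, with the base case $c_{n_0+n_1}=a_{n_0}b_{n_1}$ and the inductive step isolating $a_{n_0}b_{n_1+t}$ and dividing by the nonzero leading coefficient $a_{n_0}$. Your closing remark that nothing beyond the existence of $q$-expansions is used also matches the paper's own observation that the proof works for arbitrary Fourier $q$-expansions.
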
 
\begin{proof}
We prove the proposition by induction. Suppose $$fh=\sum_{n=n_0+n_1}^{\infty} c_n q^n$$
is the $q$-expansion of $fh$. 

Look at the first non-zero coefficient of the product $fh$, i.e., $n_0+n_1$-th term of $fh$,
which is $c_{n_0+n_1} = a_{n_0} b_{n_1} \in K$. Since $a_{n_0} \not =0$, we get that $b_{n_1}$ is a non-zero element of $K$.

Now, look at the $n_0+n_1+1$-th term of the product $fh$,
which is $c_{n_0+n_1+1} = a_{n_0} b_{n_1+1} + a_{n_0+1} b_{n_1} \in K$. Since $a_{n_0}, a_{n_0+1}, b_{n_1} \in K$,
we get that $b_{n_1+1} \in K$.

Now, we assume that $b_{n_1}, b_{n_1+1}, \ldots, b_{n_1+r-1} \in K$ and show that $b_{n_1+r} \in K$.
We can write $$c_{n_0+n_1+r} = a_{n_0} b_{n_1+r} + a_{n_0+1} b_{n_1+r-1} + \ldots + a_{n_0+r} b_{n_1} \in K.$$
Since $b_{n_1},b_{n_1+1}, \ldots, b_{n_1+r-1} \in K$ and $a_{n_0},a_{n_0+1}, \ldots, a_{n_0+r} \in K$, 
we see that $b_{n_1+r}$ also belongs to $K$.
This proves the proposition.
\end{proof}

Observe that the above proof also works for any two general Fourier $q$-expansions. 
Now, coming back to the pair $(f,\Theta)$, we have the following:
\begin{cor}
If the coefficients $c_n(n \in \N)$ of $g=f\Theta$ belong to a number field $K$, then
the Fourier coefficients $a_f(n) \in K$ for all $n$.
\end{cor}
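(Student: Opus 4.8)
The plan is to obtain the corollary as an immediate application of Proposition~\ref{main-prop}, taking $\Theta$ to be the factor whose Fourier coefficients are already known to be algebraic. First I would record that $\Theta = \sum_{n=0}^{\infty} b_\Theta(n) q^{n^2} \in M_{1/2}(4,\chi_{\mrm{triv}})$ has every Fourier coefficient equal to $0$, $1$, or $2$; in particular all of them lie in $\Q \subseteq K$, and the leading coefficient $b_\Theta(0) = 1$ is nonzero. Hence $\Theta$ meets the hypotheses demanded of the ``known'' factor in the proposition.

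Next I would dispose of the trivial case $f = 0$, in which $a_f(n) = 0 \in K$ for every $n$. Assuming $f \neq 0$, let $m_0$ denote the index of the first nonzero Fourier coefficient of $f$, so that $f = \sum_{n=m_0}^{\infty} a_f(n) q^n$ with $a_f(m_0) \neq 0$. I would then apply Proposition~\ref{main-prop} to the pair $(\Theta, f)$, with $\Theta$ in the role of the form ``$f$'' of the statement (leading index $0$, all coefficients in $K$) and $f$ in the role of ``$h$'' (leading index $m_0$). Since the coefficients $c_n$ of $g = \Theta f = f\Theta$ lie in $K$ by hypothesis and every coefficient of $\Theta$ lies in $K$, the proposition gives $a_f(n) \in K$ for all $n \geq m_0$, and therefore for all $n$.

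The content is carried entirely by Proposition~\ref{main-prop}, so there is no genuine obstacle; the only point requiring attention is the bookkeeping of which factor plays which role. The decisive observation is that it is the theta series, not $f$ itself, whose coefficients are patently algebraic, so that the induction of the proposition runs in precisely the direction needed to recover the unknown coefficients $a_f(n)$ from the known coefficients of the product.
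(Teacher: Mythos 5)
Your proposal is correct and matches the paper's intent exactly: the corollary is stated as an immediate consequence of Proposition~\ref{main-prop}, obtained by letting $\Theta$ (all of whose coefficients lie in $\Q \subseteq K$, with nonzero leading coefficient) play the role of the known factor and $f$ the role of the unknown one. Your extra care about the trivial case $f=0$ and the choice of the leading index $m_0$ is sound bookkeeping that the paper leaves implicit.
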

We shall illustrate the above proposition with an example.
\begin{ex}
Take $k=7$, $N=8$.  Using {\tt MAGMA}\cite{MAGMA}, we see that $S_{7/2}(8,\chi_{\mrm{triv}})$ is one-dimensional 
and is spanned by
\begin{equation*}
f =  q - 2q^2 - 4q^5 + 12q^6 - 3q^9 - 20q^{10} + O(q^{12}).
\end{equation*}
Now consider the integral weight modular form $g := f \Theta$. This is an element of $S_4(8, \chi_{\mrm{triv}})$,
which is of dimension $1$. Moreover, $S_4^{\mrm{new}}(8, \chi_{\mrm{triv}})$ is also of dimension $1$. By comparing
the Fourier coefficient of $q$ in $f\Theta$, we see that
$$S_4^{\mrm{new}}(8, \chi_{\mrm{triv}}) = \lan g \ran.$$ 
By Proposition~\ref{main-prop}, we see that the Fourier coefficients of $f$ generate a number field. 
\end{ex}

\begin{remark}
The same argument go through, if you replace $\Theta$ by $\Theta(\psi, 0,z)$ for $\psi$ an even Dirichlet character,
where $\Theta(\psi, 0,z)= \sum _{n=0}^{\infty} \psi(n) b_{\Theta}(n)$. 
 
Let $d$ be a positive integer and $V(d)$ be the usual shift operator. 
If the Fourier coefficients of $f(z) \cdot V(d)\Theta(\psi, 0,z)$ belongs to a number field $K$, then so 
are the Fourier coefficients of $f(z)$.
\end{remark}


\section{Main result}
In this section, we show that if the Fourier coefficients of a half-integral weight modular form are algebraic 
up to Sturm's bound (for half-integral weight modular forms), which is specified in terms of the level and 
weight of the corresponding modular form, then so are all others.

Let $k$, $N$ be positive integers with $k$ odd and $4 \mid N$. Let $\chi$ be an even Dirichlet character of modulus $N$. 
In his thesis Basmaji~\cite{Basmaji} gave an algorithm for computing a basis for the space of 
half-integral weight modular forms of level divisible by $16$. The main idea of the algorithm is to use theta series 
$\Theta = \sum_{n= -\infty}^{\infty} q^{n^2}$, 
$\Theta_1 =  \frac{\Theta - V(4)\Theta}{2}$ and the following embedding,
\[
\varphi : S_{k/2}(N, \chi) \rightarrow {S \times S},
\qquad \qquad
f \mapsto (f\Theta,f\Theta_1),
\]
where $S= S_{\frac{k+1}{2}}\left(N,\ \chi\cdot\chi_{-1}^\frac{k+1}{2}\right)$. This idea 
was later generalized by Steve Donnelly for levels divisible by $4$ by using different theta-multipliers~\cite{MAGMA}.

We will be requiring the following lemma, which is an analogue of Sturm's Theorem~\cite[page 276]{Sturm}
for half-integral weight modular forms.

\begin{lem}\label{lem:Sturm}
Let $f= \sum_{n=0}^{\infty} a_f(n) q^n \in M_{k/2}(N,\chi)$ with $k$ odd.
If $a_f(n)=0$ for $n\leq \frac{k}{24}[\SL_2(\Z):\Gamma_0(N)]$, then $f=0$.
\end{lem}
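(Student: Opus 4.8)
The plan is to deduce the half-integral weight statement from the classical (integral weight) valence formula by passing to a suitable power of $f$. Write $f(\gamma z)=\chi(d)\,j(\gamma,z)^{k}f(z)$ for $\gamma=\psmat{a}{b}{c}{d}\in\Gamma_0(N)$, where $j(\gamma,z)$ is the theta multiplier, which satisfies $j(\gamma,z)^{4}=(cz+d)^{2}$. Fix $m:=4\,\ord(\chi)$. Then $mk$ is even, $j(\gamma,z)^{mk}=(cz+d)^{mk/2}$, and the Nebentypus becomes $\chi^{m}=\chi^{4\,\ord(\chi)}=\chi_{\mathrm{triv}}$. Hence $F:=f^{m}$ is an honest integral weight form, $F\in M_{w}(\Gamma_0(N))$ with trivial character, where $w:=mk/2$. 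This reduction is the whole point: an even power of $f$ already lands in integral weight, and the extra factor kills the character so that the plain $\Gamma_0(N)$ valence formula applies.

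Next I would translate the vanishing hypothesis. Since the cusp $\infty$ has width $1$ for $\Gamma_0(N)$, the exponents in the $q$-expansion are the usual nonnegative integers and $\ord_\infty(f)$ is the least $n$ with $a_f(n)\neq0$. The assumption $a_f(n)=0$ for all $n\le \frac{k}{24}[\SL_2(\Z):\Gamma_0(N)]$ therefore says $\ord_\infty(f)>\frac{k}{24}[\SL_2(\Z):\Gamma_0(N)]$. Multiplicativity of the order at a point then gives
\[
\ord_\infty(F)=m\,\ord_\infty(f)>\frac{mk}{24}[\SL_2(\Z):\Gamma_0(N)]=\frac{w}{12}[\SL_2(\Z):\Gamma_0(N)],
\]
where the final equality uses $w=mk/2$.

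Finally I would invoke the valence formula for $F\in M_{w}(\Gamma_0(N))$ with trivial character: since $-I\in\Gamma_0(N)$, the index in $\SL_2(\Z)$ coincides with the index in $\mathrm{PSL}_2(\Z)$, and the total weighted order of vanishing of a nonzero weight-$w$ form equals $\frac{w}{12}[\SL_2(\Z):\Gamma_0(N)]$; in particular the order at the single cusp $\infty$ cannot exceed this value. As $\ord_\infty(F)$ strictly exceeds $\frac{w}{12}[\SL_2(\Z):\Gamma_0(N)]$, we conclude $F=0$, and hence $f=0$.

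The argument is short, and I do not expect a genuine obstacle; the content lies entirely in the reduction of the first paragraph. The points to get exactly right are the multiplier bookkeeping---verifying $j(\gamma,z)^{4}=(cz+d)^{2}$ so that an even power of $f$ has integral weight, and that $m=4\,\ord(\chi)$ clears the character so that no Nebentypus (and no auxiliary $\chi_{-1}$ twist) survives---together with the matching of the strict inequality to the valence threshold. This matching works precisely because the stated half-integral bound $\frac{k}{24}[\SL_2(\Z):\Gamma_0(N)]$ is the weight-$\frac{k}{2}$ instance of $\frac{(\textrm{weight})}{12}[\SL_2(\Z):\Gamma_0(N)]$ and is preserved under $\ord_\infty\mapsto m\,\ord_\infty$.
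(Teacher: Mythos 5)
Your proof is correct and follows essentially the same route as the paper: raise $f$ to the power $4\,\ord(\chi)$ to obtain an integral weight form of weight $2k\,\ord(\chi)$ on $\Gamma_0(N)$ with trivial character, and observe that the vanishing hypothesis scales to exactly the integral weight threshold. The only cosmetic difference is that you invoke the valence formula directly where the paper cites Sturm's theorem, which in this characteristic-zero setting is the same bound.
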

\begin{proof}
Set $B=\frac{k}{24}[\SL_2(\Z):\Gamma_0(N)]$.
Since $a_f(n)=0$ for $n \le B$, the Fourier expansion of $f$ at $\infty$ can be written as
$$ f = q^{B+1} (a_f(B+1) + a_f(B+2)q+ \cdots). $$

Let $s$ be the order of the Dirichlet character $\chi$. Then, it is easy to see that 
$f^{4s} \in M_{2ks}(\Gamma_0(N), \chi_\mrm{triv})$ is an integral weight modular form. Clearly, the
Fourier expansion of $f^{4s}$ at $\infty$ looks like
$$ f^{4s} = q^{4s(B+1)} \left(\sum_{n=0}^{\infty} c_n q^n \right),$$
where $c_n$ is in terms of $a_f(i)$ for $i \le n$.

Since the Fourier coefficients of $f^{4s}$ are zero up to $4sB=\frac{2ks}{12}[\SL_2(\Z):\Gamma_0(N)]$, 
by applying Sturm's theorem~\cite[page 276]{Sturm} to $f^{4s}$, we get that $f^{4s}=0$. This implies that
$f=0$, which proves the lemma.
\end{proof}

Our results are in the flavor of Sturm's work~\cite{Sturm} on determining modular forms by checking congruences 
up to a finite number. 
Set $B_k(N): = \frac{k}{24} \cdot [\SL_2(\Z):\Gamma_0(N)]$. 
\begin{thm}\label{thm:alg}
Let $f=\sum_{n=1}^{\infty} a_f(n) q^n \in S_{k/2}(N, \chi)$ be a non-zero half-integral weight modular form. 
Suppose $a_f(m)$ is algebraic for all $1 \leq m \leq B_k(N)$. Then all Fourier coefficients are algebraic. 
Moreover, there exists a number field $K_f$ such that $a_f(n) \in K_f$ for all $n$.
\end{thm}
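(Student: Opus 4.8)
The plan is to turn the statement into a finite problem in linear algebra over $\overline{\Q}$. Two inputs drive the argument: the existence of a basis of $S_{k/2}(N,\chi)$ all of whose Fourier coefficients are algebraic, and the injectivity of the truncated $q$-expansion map supplied by Lemma~\ref{lem:Sturm}. It is important that the bound in that lemma is exactly $B_k(N)=\frac{k}{24}[\SL_2(\Z):\Gamma_0(N)]$, matching the range in which we are handed algebraicity; this is why I would work directly in weight $k/2$ rather than passing to the integral-weight form $f\Theta$, whose own Sturm bound $\frac{k+1}{24}[\SL_2(\Z):\Gamma_0(N)]$ strictly exceeds the data available (note that $c(n)$ for $n>B_k(N)$ involves the unknown coefficient $a_f(n)$).

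Granting a basis $f_1,\dots,f_d$ of $S_{k/2}(N,\chi)$ with all Fourier coefficients in a fixed number field $L$, I would first write $f=\sum_{i=1}^d c_i f_i$ with $c_i\in\C$, so that $a_f(n)=\sum_{i=1}^d c_i\,a_{f_i}(n)$ for every $n$. Next, set $B=\lfloor B_k(N)\rfloor$. By Lemma~\ref{lem:Sturm} a form in $S_{k/2}(N,\chi)$ vanishes once its coefficients up to index $B$ all vanish, so the map $h\mapsto(a_h(1),\dots,a_h(B))$ is injective on $S_{k/2}(N,\chi)$; equivalently the matrix $M=(a_{f_i}(n))_{1\le i\le d,\,1\le n\le B}$ has rank $d$. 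Choosing $d$ indices $n$ for which the associated $d\times d$ submatrix $M_0$ of $M$ is invertible, the equations $a_f(n)=\sum_i c_i a_{f_i}(n)$ at these indices form a square linear system with coefficient matrix $M_0\in\GL_d(L)$; solving it expresses each $c_i$ as an $L$-linear combination of the corresponding coefficients $a_f(n)$. Since those $a_f(n)$ are algebraic by hypothesis, every $c_i$ is algebraic. The identity $a_f(n)=\sum_{i=1}^d c_i\,a_{f_i}(n)$ then shows that every $a_f(n)$ is algebraic and that all of them lie in the number field $K_f:=L(c_1,\dots,c_d)$, which is exactly the assertion of the theorem.

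The genuinely substantive step is the first one: exhibiting a basis of $S_{k/2}(N,\chi)$ with algebraic Fourier coefficients. I would obtain this from the Basmaji--Donnelly setup recalled above. The embedding $\varphi\colon S_{k/2}(N,\chi)\hookrightarrow S\times S$, $f\mapsto(f\Theta,f\Theta_1)$, lands inside the subspace $W=\{(u,v)\in S\times S: u\Theta_1=v\Theta\}$, which is cut out by $\Q$-linear conditions because $\Theta$ and $\Theta_1$ have rational Fourier coefficients; since the integral-weight space $S$ admits a basis with coefficients in a number field, so does $W$. The point requiring care is the identification $\varphi(S_{k/2}(N,\chi))=W$, after which dividing the algebraic $q$-expansions of a basis of $W$ by $\Theta$ (whose constant term is $1$) returns the desired algebraic basis $f_1,\dots,f_d$ downstairs. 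I expect verifying this identification---equivalently, a dimension count showing $\dim W=\dim S_{k/2}(N,\chi)$ together with the fact that the resulting quotient series are again holomorphic half-integral weight forms---to be the main obstacle; alternatively, one may simply invoke the known existence of such algebraic bases from Shimura's theory.
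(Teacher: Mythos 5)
Your proposal is correct and follows essentially the same route as the paper: an algebraic basis of $S_{k/2}(N,\chi)$ from Basmaji's algorithm, full rank of the $B\times d$ truncated coefficient matrix via Lemma~\ref{lem:Sturm}, and inversion of a $d\times d$ submatrix to express the combination coefficients (and hence all $a_f(n)$) over a number field. The only divergence is that you sketch a justification of the algebraic basis through the Basmaji--Donnelly embedding, where the paper simply cites Basmaji's algorithm as providing a basis defined over the field generated by $\chi$.
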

\begin{proof}
By Basmaji's algorithm \cite{Basmaji}, we can construct a basis for the space of cusp forms 
$S_{k/2}(N, \chi)$ such that the basis elements have Fourier coefficients defined over the number field 
generated by $\chi$. 
Let $f_1, f_2, \ldots, f_r$ denote such a basis of $S_{k/2}(N, \chi)$. 
For each $i$, 
suppose $f_i := \sum_{n=1}^{\infty} a_i(n) q^n$ is the $q$-expansion. Write $ f = \sum_{i=1}^r \lambda_i f_i$  where 
$\lambda_i \in \C$. Hence, we have the following system of linear equations given by 
\begin{gather}\label{matrix:mat1}
\begin{bmatrix} 
a_1(1) &  a_2(1) &\cdots & a_r(1)\\ 
\vdots &  \vdots & \cdots & \vdots\\
a_1(m) &  a_2(m) &\cdots & a_r(m)\\
\vdots &  \vdots  & \cdots & \vdots\\
a_1(n) &  a_2(n) &\cdots & a_r(n)\\
\vdots &  \vdots  & \cdots & \vdots\\
\end{bmatrix}                 
\begin{bmatrix} 
\lambda_1 \\ \lambda_2 \\ \vdots \\\lambda_r
\end{bmatrix}
= 
\begin{bmatrix} 
a_f(1) \\ \vdots \\ a_f(m) \\ \vdots \\ a_f(n)  \\ \vdots 
\end{bmatrix}
\end{gather}
For simplicity, let $B$ denote $B_k(N)$.
Now from the above matrix, we consider the first $B$ rows to form the following $B \times r$-matrix, where $B \geq r$:
\begin{gather}
A:=
\begin{bmatrix} 
a_1(1) &  a_2(1) &\cdots & a_r(1)\\ 
\vdots &  \vdots & \cdots & \vdots\\
a_1(m) &  a_2(m) &\cdots & a_r(m)\\
\vdots &  \vdots  & \cdots & \vdots\\
a_1(B) &  a_2(B) &\cdots & a_r(B)\\
\end{bmatrix}                 
\end{gather}
We want to show that the rank of $A$ is $r$. Suppose not. 
Then, there exists $\alpha_1, \ldots, \alpha_r \in \C$, not all zero, 
such that 
\begin{equation}\label{matrix:mat3}
\sum_{i=1}^r \alpha_i a_i(j)  = 0, 
\end{equation}
for each $j= 1, \ldots, B$.
Now, consider the half-integral weight modular form $$h:= \alpha_1 f_1 +\alpha_2 f_2 + \ldots + \alpha_r f_r.$$
Now, by~\eqref{matrix:mat3}, the first $B$ coefficients of $h$ are zero. By Lemma~\ref{lem:Sturm}, 
we see that the modular form $h$ is identically zero. Therefore, we have 
$$ \alpha_1 f_1 +\alpha_2 f_2 + \ldots + \alpha_r f_r = 0.$$
Since $f_1, \ldots, f_r$ are linearly independent, we see that all $\alpha_i$'s have to be zero, which is a contradiction.
Therefore, the matrix $A$ has rank $r$. Let $C$ be the $r \times r$ submatrix of $A$ with full rank $r$.
Now, consider the following system of linear equations (formed from the~\eqref{matrix:mat1})
\begin{gather}
C \begin{bmatrix} 
\lambda_1 \\ \lambda_2 \\ \vdots \\\lambda_r
\end{bmatrix}
= 
\begin{bmatrix} 
a_f(i_1) \\ a_f(i_2) \\ \vdots \\ a_f(i_r)  
\end{bmatrix}
\end{gather}
for some distinct $1 \leq i_1 < i_2 < \cdots < i_r \leq B$.
Since $C$ is invertible, we see that $\lambda_i$'s can be expressed as an algebraic linear combination of $a_f(i_j)$ 
for $j=1,\ldots,r$. Therefore, $\lambda_i$'s are algebraic, hence all the Fourier coefficients of $f$ are algebraic.

Now, take $K_f$ to the number field generated by $\lambda_1, \ldots, \lambda_n$ and the values of $\chi$.
Since $f = \sum_{i=1}^n \lambda_i f_i$, we see that $a_f(n) \in K_f$ for all $n$.
\end{proof}
\begin{remark}
Using the same method one can see that the above theorem also holds for integral weight cusp forms for any level 
and nebentypus.
\end{remark}

One could also use the idea of multiplying the half-integral weight modular form $f$ of any level and nebentypus
by theta series $\Theta$ (as in Section~\ref{Section:ThetaSeries}) 
and argue in the integral weight case setting (cf.~\cite[Lemma 3.1]{KM12}). In this approach, one may have 
to check the algebraicity of the Fourier coefficients of $f\Theta$ up to  
$\frac{k+1}{24}[\SL_2(\Z):\Gamma_0(N)]$
to conclude that  all the other coefficients of $f\Theta$ are algebraic, and
hence the coefficients of $f$ as well. However, if we work completely in the half-integral 
weight setting, it is sufficient to check 
the algebraicity of the Fourier coefficients of $f$ up to $B:=\frac{k}{24}[\SL_2(\Z):\Gamma_0(N)]$ (in particular,
this implies that only the first $B$ coefficients of $f\Theta$ are algebraic).
Hence, we chose to work in the half-integral weight setting which gives a bit better bound.

We predict that for an eigenform $f$ such a bound for the algebraicity can be sharpened by using Waldspurger's 
results~\cite{Waldspurger}. We hope to come back to this in the future.

\section{When $N/4$ is odd and square-free}
Let $k$, $N$ be positive integers with $k \geq 3$ odd and $4 \mid N$. 
Let $\chi$ be an even quadratic Dirichlet character of modulus $N$. 

Let $F = \sum_{n=1}^{\infty} A(n)q^n$ be a newform of weight $k-1$, level $N/4$ odd and square-free
with trivial nebentypus. Let $f$ be a non-zero element of  $S_{k/2}(N,\chi,F)$ 
(for the definition, see \S\ref{sec:algbas}).
In this case, checking the algebraicity of the Fourier coefficients of $f$ 
becomes practically effective.

\subsection{An algebraic basis for $S_{k/2}(N,\chi,F)$:}\label{sec:algbas}
In this section, we shall recall some basic definitions and results.  

Let $S_{k/2}^\prime(N,\chi)$ be the orthogonal complement of 
the subspace of $S_{k/2}(N,\chi)$ spanned by single-variable theta series 
with respect to the Petersson inner product. 
Note that for $k \ge 5$, we have $S_{k/2}^\prime(N,\chi) = S_{k/2}(N,\chi)$.

Let $N^\prime=N/2$. For $M \mid N^\prime$ such that $\cond(\chi^2) \mid M$
and  $F \in S_{k-1}^{\mathrm{new}}(M,\chi^2)$,  Shimura defines
\[
S_{k/2}(N,\chi,F)=
\{
f \in S_{k/2}^{\prime}(N,\chi) : 
\text{$T_{p^2}(f)=\lambda_{p}^{F} f$ for almost all $p \nmid N$}\}; 
\]
here $T_{p} (F) =\lambda_{p}^{F} F$.
In~\cite[Corollary $5.2$]{SomaI} the second author gives an algorithm for computing these subspaces.
We will need the following proposition in the next section.
\begin{prop}
\label{Algebraic-basis}
Let $F$ be a newform in $S_{k-1}^{\mathrm{new}}(M,\chi^2)$ with level $M$ dividing $N^{\prime}$. 
Then there exists a basis of $S_{k/2}(N, \chi, F)$ defined over the number field generated 
by the Fourier coefficients of $F$ and $\chi$. 
\end{prop}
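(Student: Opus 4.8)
The plan is to produce the claimed basis by combining Shimura's characterization of $S_{k/2}(N,\chi,F)$ as a simultaneous eigenspace with the Basmaji-type construction of an explicitly computable basis of the ambient space. First I would invoke the statement, recorded just before Theorem~\ref{thm:alg}, that Basmaji's algorithm yields a basis $f_1,\dots,f_r$ of $S_{k/2}(N,\chi)$ whose Fourier coefficients lie in the number field $L_\chi$ generated by the values of $\chi$. Passing to the relevant subspace $S_{k/2}^{\prime}(N,\chi)$ (equal to the full space when $k\ge 5$, and otherwise the orthogonal complement of the theta series, whose coefficients are rational) one still has a spanning set with coefficients in $L_\chi$, since the theta series to be quotiented out are defined over $\Q$. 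The key observation is that the defining conditions $T_{p^2}(f)=\lambda_p^{F} f$ are linear, and crucially the Hecke operators $T_{p^2}$ act on $q$-expansions by formulas (Shimura's) whose coefficients are algebraic and lie in $L_\chi$, while the eigenvalues $\lambda_p^{F}$ are the Fourier coefficients of $F$, hence lie in the field $K_F$ generated by the coefficients of $F$ and $\chi$.

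The main steps, in order, are as follows. Let $K_F$ denote the number field generated by the Fourier coefficients of $F$ together with the values of $\chi$; this is a finite extension of $\Q$ containing $L_\chi$. Writing each basis element $f_i$ with coefficients in $L_\chi\subseteq K_F$, the operator $T_{p^2}-\lambda_p^{F}$ acts on the finite-dimensional $K_F$-vector space $V:=S_{k/2}^{\prime}(N,\chi)\otimes_{L_\chi}$-coefficients, and its matrix in the basis $(f_i)$ has entries in $K_F$. By Proposition~\ref{prop:algo} (the computable reformulation recorded in the commented material, which I would cite via~\cite{SomaI}), the space $S_{k/2}(N,\chi,F)$ is the intersection of the kernels of finitely many such operators $T_{p_i^2}-\lambda_{p_i}^{F}$, namely for a finite set of primes $p_1,\dots,p_s$ separating $F$ from the other newforms of the relevant level and character. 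Thus $S_{k/2}(N,\chi,F)$ is the solution space of a finite homogeneous linear system with coefficient matrix over $K_F$.

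It then follows from linear algebra over the field $K_F$ that this solution space admits a basis expressed as $K_F$-linear combinations of $f_1,\dots,f_r$; since each $f_i$ has coefficients in $L_\chi\subseteq K_F$, the resulting basis elements have Fourier coefficients in $K_F$, which is exactly the asserted number field. The last point to verify is that this $K_F$-rational solution space has the right dimension, i.e.\ that base change is harmless: because the eigenvalue equations and the $f_i$ are already defined over $K_F$, the rank of the system is the same computed over $K_F$ or over $\C$, so the $\C$-dimension of $S_{k/2}(N,\chi,F)$ equals the $K_F$-dimension of the kernel, and a $K_F$-basis of the kernel remains $\C$-linearly independent and spans.

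The step I expect to be the main obstacle is ensuring the rationality of the Hecke action, that is, confirming that the matrices of $T_{p^2}$ on the Basmaji basis genuinely have entries in $K_F$ rather than merely in $\C$. This reduces to Shimura's explicit formula for $T_{p^2}$ on half-integral weight $q$-expansions, in which the coefficients are polynomial expressions in $p$, in the character values $\chi(p)$, and in $p^{k/2-\text{powers}}$ (integers), all of which lie in $L_\chi\subseteq K_F$; once this is in place, the descent of the eigenspace basis from $\C$ to $K_F$ is the routine linear-algebra argument above. A secondary check is that replacing $S_{k/2}(N,\chi)$ by $S_{k/2}^{\prime}(N,\chi)$ preserves $K_F$-rationality, which holds because the theta series spanning the complement are defined over $\Q$.
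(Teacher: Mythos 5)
Your proposal is correct and follows essentially the same route as the paper: a Basmaji basis of $S_{k/2}(N,\chi)$ defined over the field generated by $\chi$, the finitely many separating primes of \cite[Corollary 5.2]{SomaI} turning $S_{k/2}(N,\chi,F)$ into the solution space of a homogeneous linear system whose matrix has entries in the field generated by the coefficients of $F$ and $\chi$, and then linear algebra over that number field. The extra care you take with the rationality of the Hecke action and with base change is implicit in the paper's (much terser) argument, so there is nothing to add.
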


\begin{proof}
By Basmaji's algorithm~\cite{Basmaji}, one can construct a basis for the space of cusp forms $S_{k/2}(N, \chi)$ defined over the number field 
generated by $\chi$. Let $f_1, f_2, \dots f_r$ be such a basis and let $f_i = \sum_{n=1}^{\infty} a_i(n)q^n$.
Let $p_1 < p_2< \cdots <p_s$ be the primes chosen as in ~\cite[Corollary $5.2$]{SomaI}. Let 
$T_{p_j^2}f_i = \sum_{n=1}^{\infty} b_{i,j}(n)q^n$ for $1 \le i \le r$ and $1 \le j \le s$. 
By the same corollary
to construct a basis for $S_{k/2}(N, \chi, F)$, we need to determine a basis of the solution space for the simultaneous homogeneous 
system of linear equations given by 
\begin{gather*}
\begin{bmatrix} 
\vdots & \vdots & \hdots & \vdots\\
b_{1,1}(n) - \lambda_{p_1}^{F} a_1(n) &  b_{2,1}(n) - \lambda_{p_1}^{F} a_2(n) & \hdots & b_{r,1}(n) - \lambda_{p_1}^{F} a_r(n)\\ 
\vdots & \vdots & \hdots & \vdots\\
b_{1,s}(n) - \lambda_{p_s}^{F} a_1(n) &  b_{2,s}(n) - \lambda_{p_s}^{F} a_2(n) & \hdots & b_{r,s}(n) - \lambda_{p_s}^{F} a_r(n)\\
\vdots & \vdots & \hdots & \vdots\\
\end{bmatrix}                 
\begin{bmatrix} x_1 \\ x_2 \\ \vdots \\x_r\end{bmatrix}
= 
\begin{bmatrix} \vdots \\ 0  \\ \vdots \\ 0 \\ \vdots \end{bmatrix}.
\end{gather*}
Since the matrix of the system is defined over number field generated by $F$ and $\chi$, the proposition clearly follows. 
\end{proof}

\subsection{On determination of the algebraicity}
Let $F$ be a newform of weight $k-1$ and level $N/4$ odd and square-free 
with trivial nebentypus. 
Let $S_{k/2}^{+}(N,\chi)$ denote Kohnen's $+$-subspace of $S_{k/2}(N,\chi)$
consisting of modular forms $f = \sum_{n=1}^{\infty} a_f(n) q^n $ with
$a_f(n)=0$  for $n \equiv 2,(-1)^\frac{k+1}{2} \pmod 4$. Set $S^+_{k/2}(N,\chi,F)= S_{k/2}^{+}(N,\chi) \cap S_{k/2}(N,\chi,F)$.


By Waldspurger~\cite[Th\'{e}or\`{e}me 1]{Waldspurger}, we know that $S_{k/2}(N,\chi,F)$ is $2$-dimensional
as $N/4$ is odd and square-free.
By Proposition~\ref{Algebraic-basis}, we know that 
there exists a basis $f_1,f_2$ of  $S_{k/2}(N,\chi,F)$,
which are defined over the field generated by the Fourier coefficients of $F$ and  $\chi$. 
By~\cite[Theorem 2]{Koh82}, Kohnen's $+$-subspace $S^+_{k/2}(N,\chi,F)$ is $1$-dimensional. 
Suppose that $S^+_{k/2}(N,\chi,F)= \lan f_K \ran$.  

\begin{prop}
The Fourier coefficients of $f_K$ are algebraic up to a normalization.
\end{prop}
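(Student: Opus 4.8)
The plan is to combine the one-dimensionality of the Kohnen plus-space $S^+_{k/2}(N,\chi,F)$ with the algebraic basis $f_1, f_2$ of $S_{k/2}(N,\chi,F)$ provided by Proposition~\ref{Algebraic-basis}. Since $S^+_{k/2}(N,\chi,F) = \langle f_K \rangle$ is a one-dimensional subspace of the two-dimensional space $S_{k/2}(N,\chi,F)$, I can write $f_K = \lambda_1 f_1 + \lambda_2 f_2$ for some $\lambda_1, \lambda_2 \in \C$, not both zero. The goal is to show that, after rescaling, the ratio $\lambda_1 : \lambda_2$ is algebraic, since the $f_i$ already have algebraic Fourier coefficients lying in the number field generated by the coefficients of $F$ and the values of $\chi$.

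First I would use the defining vanishing condition of the plus-space. Writing $f_i = \sum_{n\ge 1} a_i(n)q^n$, the membership $f_K \in S^+_{k/2}(N,\chi)$ forces $\lambda_1 a_1(n) + \lambda_2 a_2(n) = 0$ for every $n$ in the forbidden congruence classes $n \equiv 2, (-1)^{\frac{k+1}{2}} \pmod 4$. Each such equation is a homogeneous linear relation in $(\lambda_1, \lambda_2)$ with algebraic coefficients $a_1(n), a_2(n)$. Because $f_1, f_2$ themselves do not individually lie in the plus-space (otherwise the plus-space would be at least as large as their span, contradicting one-dimensionality), there must exist some index $n_0$ in a forbidden class for which the coefficient vector $(a_1(n_0), a_2(n_0))$ is nonzero. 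I would then solve the single relation $\lambda_1 a_1(n_0) + \lambda_2 a_2(n_0) = 0$, which pins down the ratio $\lambda_1 : \lambda_2$ as a quotient of algebraic numbers. Normalizing, say, $\lambda_2 = -a_1(n_0)$ and $\lambda_1 = a_2(n_0)$ exhibits $f_K$ as an explicit algebraic linear combination of $f_1$ and $f_2$, and hence all its Fourier coefficients are algebraic.

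The main obstacle is ensuring the existence of a usable index $n_0$, that is, guaranteeing that the vanishing conditions cut out \emph{exactly} a one-dimensional space rather than being vacuous or inconsistent. This is where the hypotheses do the work: the equality $\dim S^+_{k/2}(N,\chi,F) = 1$ (from \cite[Theorem 2]{Koh82}) together with $\dim S_{k/2}(N,\chi,F) = 2$ (from Waldspurger) guarantees that the linear system imposed by the plus-condition has a one-dimensional solution space, so at least one forbidden-class relation is nontrivial and the others are consistent with it. I would make this precise by noting that the plus-condition on $S_{k/2}(N,\chi,F)$ is itself a system of homogeneous linear equations over the algebraic number field $K$ generated by the coefficients of $F$ and $\chi$; a proper nontrivial $K$-rational subspace arising as its solution set must be spanned by a $K$-rational vector, whence $f_K$ is (up to scaling) defined over $K$.

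Finally, I would invoke the effective version of this reasoning to phrase the conclusion in the spirit of the rest of the paper: one need only examine finitely many Fourier coefficients. By Lemma~\ref{lem:Sturm} (applied in the half-integral weight setting), a form in $S_{k/2}(N,\chi)$ is determined by its coefficients up to $B_k(N)$, so it suffices to check the vanishing conditions $\lambda_1 a_1(n) + \lambda_2 a_2(n) = 0$ for the finitely many forbidden $n \le B_k(N)$ to determine the ratio, and a single nonvanishing such coefficient vector suffices to solve for $\lambda_1 : \lambda_2$ over $K$. This recovers the algebraicity of $f_K$ up to normalization and keeps the argument within the finite, checkable framework established earlier.
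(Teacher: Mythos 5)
Your proof is correct, but it takes a genuinely different route from the paper's. The paper argues via Kohnen's isomorphism $\mathcal{S}: S^{+,\,\mathrm{new}}_{k/2}(N,\chi) \to S_{k-1}^{\mathrm{new}}(N/4)$: it identifies $f_K$ (up to scalar) with the preimage of $F$ using the commutation of the Shimura correspondence with Hecke operators, and then invokes \cite{Koh92} to conclude that after normalizing by $a_K(|D_0|)^{-1}$ the coefficients lie in the explicit number field $\Q(\mu_2, \{\lambda_p^F\}_{p\neq 2})$ generated by Hecke eigenvalues. You instead stay entirely inside linear algebra: the plus-condition on the two-dimensional space $S_{k/2}(N,\chi,F)$ is a homogeneous linear system with coefficients in the number field $K$ over which the basis $f_1,f_2$ of Proposition~\ref{Algebraic-basis} is defined, and a one-dimensional solution space of a $K$-rational system is spanned by a $K$-rational vector; the one-dimensionality from \cite[Theorem 2]{Koh82} also guarantees a nontrivial forbidden-class relation $\lambda_1 a_1(n_0)+\lambda_2 a_2(n_0)=0$ that pins down the ratio. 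Your argument is more elementary and self-contained (it avoids \cite{Koh92} and the Hecke-commutation input), and it fits the linear-algebraic framework of Theorem~\ref{thm:alg}; the paper's argument buys a more precise field of definition tied to the Hecke eigenvalues of $F$ and $f_K$. One small caution about your final paragraph: checking the vanishing conditions only for forbidden $n \le B_k(N)$ does not by itself certify membership in the plus-space --- that effectivity is exactly the content of the paper's last proposition, which requires the larger bound $B'_k(N)=\frac{k}{24}[\SL_2(\Z):\Gamma_0(4N)]$ and the Sturm bound in arithmetic progressions. This does not affect the present proposition, since the mere existence of one nontrivial relation (with no bound on $n_0$) already follows from one-dimensionality.
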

\begin{proof}
Let $f_K(z) = \sum_{n=1}^{\infty} a_K(n) q^n$.
Kohnen~\cite{Koh82} gives the following isomorphism 
\[ \mathcal{S}: S^{+,\ \mathrm{new} }_{k/2}(N,\chi) \overset\sim\longrightarrow S_{k-1}^{\mathrm{new}}(N/4),\]
where $\mathcal{S}$ is a certain linear combination of Shimura correspondences. Hence there exists a unique  
$g \in S^{+,\ \mathrm{new} }_{k/2}(N,\chi)$ such that $\mathcal{S}(g)=F$. Since the Shimura correspondence commutes 
with Hecke operators  for all primes $p$~\cite{SomaII}, we get 
\[\mathcal{S}(T_{p^2} g) = T_p (\mathcal{S} g) = T_p(F) = \lambda_p^F F = \mathcal{S}( \lambda_p^F g).\]
Since $\mathcal{S}$ is an isomorphism, $T_{p^2} g = \lambda_p^F g$ for all primes $p$ and thus $g$ equals $f_K$ up to a non-zero scalar. 
Consider the field $K_{f_K}=\Q( \mu_2,\ \{\lambda_p^F\}_{p \ne 2})$ where 
$\mu_2$ is the Hecke eigenvalue of $f_K$ under the Hecke operator $T^{+}_{4}$.
By~\cite{Koh82}, $\mu_2$ is algebraic and hence $K_{f_K}$ is a number field.

By~\cite{Koh92}, it follows that by multiplying $f_K$ with $a_K(|D_0|)^{-1}$ where $D_0$ is a suitable fundamental discriminant 
with $(-1)^ \frac{k-1}{2} D_0 > 0$ and $a_K(|D_0|) \not = 0$ one can assume that the Fourier coefficients 
of $f_K$ lie in $K_{f_K}$. Hence the Fourier coefficients of $f_K$ are algebraic up to a normalization.
\end{proof}
By the above proposition, we can now choose the basis element $f_1$ to be equal to $f_K$. 
By the argument as in Theorem~\ref{thm:alg},   the following result holds for $S_{k/2}(N, \chi, F)$. 
\begin{cor}\label{cor:m0n0}
Let $f=\sum_{n=1}^{\infty} a_f(n) q^n$ be an element of  $S_{k/2}(N,\chi,F)= \lan f_1,f_2 \ran$,
where $f_i (z) := \sum_{n=1}^{\infty} a_i(n) q^n$, with $a_i(*)$'s are algebraic.
Choose $m_0 \in \N$ with $m_0 \not\equiv 0, (-1)^{\frac{k-1}{2}} \pmod 4$ such that $a_1(m_0)=0$ and 
$a_2(m_0) \not =0$. Choose $n_0 \in \N$ such that $a_1(n_0) \not = 0$.

If  $a_f(m_0)$, $a_{f}(n_0)$ are algebraic, then $a_f(n)$ are algebraic for all $n \in \N$. 
Moreover, there exists a number field $K_f$ such that $a_f(n) \in K_f$ for all $n$.
\end{cor}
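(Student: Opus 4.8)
The plan is to treat Corollary~\ref{cor:m0n0} as the two-dimensional incarnation of Theorem~\ref{thm:alg}: since $S_{k/2}(N,\chi,F)=\lan f_1,f_2\ran$ is two-dimensional, the large $B_k(N)\times r$ matrix of that proof collapses to a single $2\times 2$ matrix, and I only need two equations rather than a full rank argument. Writing $f=\lambda_1 f_1+\lambda_2 f_2$ with $\lambda_1,\lambda_2\in\C$, I have $a_f(n)=\lambda_1 a_1(n)+\lambda_2 a_2(n)$ for every $n$; the whole proof reduces to showing that $\lambda_1,\lambda_2$ are algebraic, after which this identity immediately propagates algebraicity to all Fourier coefficients of $f$.

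To pin down the two scalars I would specialize this identity to the indices $m_0$ and $n_0$, obtaining the system
\[
\begin{bmatrix} a_1(m_0) & a_2(m_0) \\ a_1(n_0) & a_2(n_0) \end{bmatrix}
\begin{bmatrix} \lambda_1 \\ \lambda_2 \end{bmatrix}
=
\begin{bmatrix} a_f(m_0) \\ a_f(n_0) \end{bmatrix}.
\]
Because $a_1(m_0)=0$, the determinant of the coefficient matrix equals $-a_2(m_0)a_1(n_0)$, which is nonzero by the choices $a_2(m_0)\neq 0$ and $a_1(n_0)\neq 0$; thus the matrix is invertible. Its entries are algebraic since, by Proposition~\ref{Algebraic-basis}, the $a_i(n)$ lie in the number field generated by the Fourier coefficients of $F$ and the values of $\chi$, and the right-hand side is algebraic by the hypothesis on $a_f(m_0)$ and $a_f(n_0)$.

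Inverting this $2\times 2$ system (Cramer's rule) expresses $\lambda_1$ and $\lambda_2$ as algebraic combinations of algebraic numbers, so both are algebraic. Feeding them back into $a_f(n)=\lambda_1 a_1(n)+\lambda_2 a_2(n)$ shows that every $a_f(n)$ is algebraic. For the number-field statement I would take $K_f$ to be the field generated by $\lambda_1,\lambda_2$, the Fourier coefficients of $F$, and the values of $\chi$; since each $a_i(n)$ already lies in the field generated by $F$ and $\chi$, the identity $f=\lambda_1 f_1+\lambda_2 f_2$ gives $a_f(n)\in K_f$ for all $n$.

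The linear algebra here is routine, so I do not expect a serious computational obstacle; the one place to be careful is the invertibility of the matrix, which hinges entirely on the triangularity forced by $a_1(m_0)=0$. The conceptually substantive point—that indices $m_0,n_0$ with the stated properties exist at all—is what the residue condition encodes: taking $f_1=f_K$ in Kohnen's plus-space, the four classes modulo $4$ split into the complementary pairs $\{0,(-1)^{\frac{k-1}{2}}\}$ and $\{2,(-1)^{\frac{k+1}{2}}\}$, and requiring $m_0\not\equiv 0,(-1)^{\frac{k-1}{2}}\pmod 4$ forces $m_0$ into the second pair, exactly the classes on which plus-space forms vanish, so $a_1(m_0)=0$ for free. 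That some such $m_0$ additionally has $a_2(m_0)\neq 0$ reflects $f_2\notin S^+_{k/2}(N,\chi,F)$, while the existence of $n_0$ is immediate from $f_1\neq 0$.
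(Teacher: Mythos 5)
Your proposal is correct and follows essentially the same route as the paper: the authors state the corollary as an instance of ``the argument as in Theorem~\ref{thm:alg},'' and their (elided) proof is precisely your $2\times 2$ system at the indices $m_0,n_0$, invertible because $a_1(m_0)=0$, $a_2(m_0)\neq 0$, $a_1(n_0)\neq 0$, with $K_f$ generated by $\lambda_1,\lambda_2$, the coefficients of $F$, and the values of $\chi$. Your closing observation on why such $m_0,n_0$ exist matches the remark the authors place immediately after the corollary.
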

\begin{remark}
One can always find such an $m_0$, $n_0$ as in the above Corollary,
since  $f_2 \not \in S^+_{k/2}(N,\chi,F)$ and $f_1 \not = 0$.
 
\end{remark}

We illustrate the results in this section with a few examples.
\begin{ex}
\label{ex-1}
Let $F$ be the newform in $S_2^{\mathrm{new}}(91)$ given by the following Fourier expansion
\[F = q - 2q^2 + 2q^4 - 3q^5 - q^7 - 3q^9 + 6q^{10} - 6q^{11} + O(q^{12}).\]
We note that $F$ is the newform corresponding to the rank $1$ elliptic curve defined by $y^2 + y = x^3 + x$.
By~\cite[Corollary $5.2$]{SomaI}, the space $S_{3/2}(364,\chi_{\mrm{triv}},F)$ is generated by 
\begin{equation*}
\begin{split}
f_1 &=  q^3 - q^{12} + q^{35} - q^{40} + O(q^{50})\\ 
f_2 &= q^{10} + q^{12} + q^{13} - q^{14} + q^{17} + q^{26} - 3q^{38} - q^{40} - 2q^{42} - 2q^{48} + O(q^{50}).
\end{split}
\end{equation*}
Here $f_1$ belongs to Kohnen's $+$-subspace and where as $f_2$ does not. Thus to check whether a given 
eigenform in $S_{3/2}(364,\chi_{\mrm{triv}},F)$ has algebraic Fourier coefficients, we need to take $m_0=10$ and $n_0=3$ in Corollary~\ref{cor:m0n0}.
\end{ex}

\begin{ex}
\label{ex-2}
Let $G$ be the newform in $S_4^{\mathrm{new}}(13)$ given by 
\[G = q + bq^2 + (-3b + 4)q^3 + (b - 4)q^4 + (b - 2)q^5 + O(q^{6}),
    \]
where the minimal polynomial of $b$ is  $x^2 - x - 4$.
Let $\chi_{13}$ be the quadratic Dirichlet character given by the Kronecker symbol $\kro{13}{\cdot}$.
By~\cite[Corollary $5.2$]{SomaI}, the space $S_{5/2}(52,\chi_{13},G)$ is generated by 
\begin{equation*}
\begin{split}
f_1 &=  q + (b + 2)q^4 + 1/2(-5b - 8)q^5 + O(q^{6})\\ 
f_2 &= q^2 + 1/19(-2b + 14)q^3 + (-b + 1)q^4 + 1/19(18b + 26)q^5 + O(q^{6}).
\end{split}
\end{equation*}
Here $f_1$ belongs to Kohnen's $+$-subspace while $f_2$ does not. Thus for a given 
eigenform in $S_{5/2}(52,\chi_{13},F)$ to have algebraic Fourier coefficients, we just need to take $m_0=2$ and $n_0=1$ in 
Corollary~\ref{cor:m0n0}.
\end{ex}

\section{Sturm's bound for modular forms in Kohnen's +-subspace}
In this section, we will give a necessary and sufficient condition for a half-integral weight modular 
form to be in  Kohnen's +-subspace by considering only finitely many terms.

\begin{prop}
Suppose $f=\sum_{n=1}^{\infty} a_f(n) q^n \in S_{k/2}(N,\chi)$. Then, 
$f \in S^{+}_{k/2}(N,\chi)$ if and only if  
$a_f(n)=0$ for all $n \equiv 2,(-1)^\frac{k+1}{2} \pmod 4$
with $1 \leq n \leq B^{\prime}_k(N)$, where $B^{\prime}_k(N)= \frac{k}{24} \cdot [\SL_2(\Z):\Gamma_0(4N)]$.

\end{prop}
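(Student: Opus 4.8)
The plan is to treat the two implications separately, the forward one being immediate and the converse carrying all the content. If $f \in S^{+}_{k/2}(N,\chi)$ then by the very definition of the plus-subspace $a_f(n)=0$ for \emph{all} $n \equiv 2,(-1)^{\frac{k+1}{2}} \pmod 4$, and in particular for those with $1 \le n \le B^{\prime}_k(N)$; so only the converse needs work. For the converse, write $r := (-1)^{\frac{k+1}{2}}$, an odd residue modulo $4$, so that belonging to $S^{+}_{k/2}(N,\chi)$ is exactly the vanishing of $a_f(n)$ on the two residue classes $n \equiv 2 \pmod 4$ and $n \equiv r \pmod 4$. The strategy is, for each class, to manufacture a half-integral weight form of level dividing $4N$ whose nonzero Fourier coefficients are precisely the $a_f(n)$ lying in that class, and then to apply Lemma~\ref{lem:Sturm}, whose Sturm bound at level $4N$ is exactly $B^{\prime}_k(N)=\frac{k}{24}[\SL_2(\Z):\Gamma_0(4N)]$.

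For the class $n \equiv 2 \pmod 4$ I would use the $U$- and $V$-operators~\cite{Shimura} and set
\[
g := V(2)\circ U(2)\,f - V(4)\circ U(4)\,f = \sum_{n \equiv 2\,(4)} a_f(n)\, q^n \in S_{k/2}(4N,\chi).
\]
Since $a_f(n)=0$ for every $n \equiv 2 \pmod 4$ with $n \le B^{\prime}_k(N)$, and $B^{\prime}_k(N)$ is exactly the Sturm bound for $S_{k/2}(4N,\chi)$, Lemma~\ref{lem:Sturm} forces $g=0$, hence $a_f(n)=0$ for all $n \equiv 2 \pmod 4$.

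For the odd class $n \equiv r \pmod 4$ the obstruction is that no combination of $U$- and $V$-operators can separate the two odd residues $1$ and $3$ modulo $4$, since these operators see only divisibility. Here I would first extract the odd part $f_{\mathrm{odd}} := f - V(2)\circ U(2)\,f = \sum_{n\ \mathrm{odd}} a_f(n)q^n$, and then twist by $\chi_{-1}$ to split it: writing $f_{\mathrm{odd}}^{\,\chi_{-1}} := \sum_{n} \chi_{-1}(n)\,a_f(n)\,q^n$, the combination
\[
g_r := \tfrac{1}{2}\bigl(f_{\mathrm{odd}} + \chi_{-1}(r)\, f_{\mathrm{odd}}^{\,\chi_{-1}}\bigr) = \sum_{n \equiv r\,(4)} a_f(n)\, q^n
\]
isolates the desired class. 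By Shimura's theory of twists~\cite{Shimura} the form $f_{\mathrm{odd}}^{\,\chi_{-1}}$ is again of weight $k/2$ with character $\chi\cdot\chi_{-1}^2=\chi$ and level dividing $\mathrm{lcm}(2N,16)$, which divides $4N$ because $4\mid N$. Thus $g_r$ lies in $S_{k/2}(M,\chi)$ for some $M\mid 4N$, so its Sturm bound is at most $B^{\prime}_k(N)$; the hypothesis annihilates all of its coefficients up to $B^{\prime}_k(N)$, and Lemma~\ref{lem:Sturm} yields $g_r=0$, i.e. $a_f(n)=0$ for all $n \equiv r \pmod 4$.

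Combining the two cases gives vanishing of $a_f(n)$ on both forbidden residue classes, that is $f \in S^{+}_{k/2}(N,\chi)$, which completes the converse. I expect the odd class to be the main obstacle: the even class is settled by a clean $U,V$-construction already at level $4N$, whereas isolating a single odd residue modulo $4$ forces a character twist, and the delicate point is to check that this twist keeps the level within $4N$ (so that the effective bound stays exactly $B^{\prime}_k(N)$) rather than inflating it to $16N$. An alternative route for this class, avoiding twisting, is to invoke a Sturm bound for the Fourier coefficients of half-integral weight forms along arithmetic progressions, which produces the same bound.
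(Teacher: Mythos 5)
Your argument is correct, and for the residue class $n \equiv 2 \pmod 4$ it coincides exactly with the paper's own proof: the same form $g = V(2)\circ U(2)f - V(4)\circ U(4)f \in S_{k/2}(4N,\chi)$, followed by the same appeal to Lemma~\ref{lem:Sturm} at level $4N$. Where you genuinely diverge is the odd class $n \equiv (-1)^{\frac{k+1}{2}} \pmod 4$: the paper disposes of it in one line by citing the Sturm bound for Fourier coefficients in arithmetic progressions from \cite[Section 7]{SomaIII} (precisely the ``alternative route'' you mention at the end), whereas you unpack that citation into the explicit character-sum extraction $\tfrac{1}{2}\bigl(f_{\mathrm{odd}} + \chi_{-1}(r)\,f^{\chi_{-1}}\bigr)$ together with Shimura's twisting lemma. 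Your version is more self-contained, but it concentrates all the risk in the level of the twist, as you yourself note: to keep the Sturm bound at $B^{\prime}_k(N)$ you need the refined form of the twisting lemma, with level $\mathrm{lcm}(N,16,4\cond(\chi))$ dividing $4N$, rather than the crude bound $16N$ (and note that since $\chi_{-1}$ already annihilates even-indexed coefficients, you may as well twist $f$ itself instead of $f_{\mathrm{odd}}$, which avoids first inflating the level to $2N$). With only the crude level the argument still closes, but with the weaker bound $\frac{k}{24}[\SL_2(\Z):\Gamma_0(16N)]$. The cited result of Purkait is exactly the packaged form of this refinement, so both routes land on the same bound; what your approach buys is independence from that reference, at the cost of having to verify the sharp level of the twist by hand.
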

\begin{proof}
For simplicity, let $B$ denote $B^{\prime}_k(N)$. The necessary condition is clear, by definition. 


%
Suppose that $a_f(n) = 0$ for all $1 \leq n \leq B$ with $n \equiv 2, (-1)^{\frac{k+1}{2}} \pmod 4$.
Since $a_f(n) = 0$  for all $1 \leq n \leq B$ with $n \equiv (-1)^{\frac{k+1}{2}} \pmod 4$, 
Sturm's bound for the Fourier coefficients of half-integral weight forms in arithmetic 
progressions~\cite[Section 7]{SomaIII} implies that $a_f(n) = 0$ for all $n$ 
with $n \equiv (-1)^{\frac{k+1}{2}} \pmod 4$. 

Now, it is enough show that $a_f(n) = 0$ for all $1 \leq n \leq B$ with $n \equiv 2 \pmod 4$ 
implies that $a_f(n) = 0$ for all $n \in \N$ with $n \equiv 2 \pmod 4$.
Unfortunately we cannot apply the above result in this situation because $(2,4) \not = 1$.
However, we can get around to prove the required statement.

Applying $V$ and $U$ operators to $f$ (cf.~\cite{Shimura}), we obtain that
$$V(4) \circ U(4) f(z) = \sum_{n=1}^{\infty} a_f(4n) q^{4n} \in S_{k/2}(4N,\chi),$$
and
$$V(2) \circ U(2) f(z) = \sum_{n=1}^{\infty} a_f(2n) q^{2n} \in S_{k/2}(2N,\chi).$$
Take 
$$ g (z)= V(2) \circ U(2) f(z) - V(4) \circ U(4) f(z) = \sum_{n=1}^{\infty} 
          a_f(4n-2) q^{4n-2} \in S_{k/2}(4N,\chi).$$
Since $a_f(n)=0$ for all $1 \leq n \leq B$ with $n \equiv 2 \pmod 4$, 
we see that the first $B$ coefficients
of $g(z)$ are zero. By Lemma~\ref{lem:Sturm} 
we obtain that $g(z) = 0$. Hence $a_f(n) = 0$ for all $n \equiv 2 \pmod 4$. 
This proves the proposition.
\end{proof}

It follows from the above proposition that in Corollary~\ref{cor:m0n0},
we can choose $m_0$, $n_0 \le B^{\prime}_k(N)$.

\section{Acknowledgements}
We thank Prof. W. Kohnen for his encouragement and Prof. G. Wiese for some useful discussions. 
This work was initiated during the first author's visit to Hausdorff Research Institute for Mathematics, 
Bonn, and second author's visit to Max Planck Institute for Mathematics, Bonn, during Jan-Feb 2013. 
We thank these institutes for their hospitalities and also for providing perfect working conditions.

\bibliographystyle{amsalpha}

\end{document}